\newtheorem{thm}{Theorem}
\newcommand{\pr}{{\mathbb P}}
\begin{document}

\begin{center}
 {\large
 Maximal Cells in Shifted Staircase Tableaux and a Quarter-Circle Law
 }

 \bigskip
 Chihiro Kubota, Taizo Sadahiro and Yoshika Ueda
\end{center}

\begin{abstract}
This note presents a numerical and theoretical study of the random standard Young tableaux 
of the shifted staircase shape $(2n-1,2n-3,\ldots,3,1)$.
Using the hook–walk algorithm to generate tableaux uniformly at random,
we evaluate the probability that a given cell contains the maximal label.
We show that the normalized position of the maximal cell converges in distribution 
to the quarter–circle law.
This observation yields the limiting distribution of the first generator 
in a random reduced decomposition of the longest element of $B_n$.
While type $A$ random sorting networks are known to follow the semicircle law 
(Angel et al., 2007; Dauvergne, 2022), 
the type $B$ case exhibits a strikingly different quarter–circle behavior.
Figures and animations are provided to illustrate the typical trajectories 
and the signed permutation matrices observed in our simulations.

\end{abstract}

Keywords: random sorting networks; shifted Young tableaux; hook walk; 
quarter-circle law; type B Coxeter group

\section{Introduction}
This note investigates the probability distribution
of the maximal cell in random standard Young tableaux
of the shifted staircase shape $(2n-1, 2n-3, \ldots, 3,1)$.
We explicitly evaluate the probability that a given cell
contains the maximal label, and show that
the asymptotic distribution of the location of the maximal cell
is described by the quarter-circle law.
Through the bijection of Haiman~\cite{haiman1992dual}
between such tableaux and the reduced decompositions
of the longest element in the Coxeter group
$B_n = {\mathfrak S}_n \wr_{[n]} \{\pm 1\}$
of signed permutations, this result yields
the limiting distribution of the first letters
(indexing the generators)
in uniformly random reduced decompositions of the longest element of $B_n$.

A {\em cell} of a standard Young diagram of
the shifted shape $(2n-1, 2n-3, \ldots, 3,1)$
is indexed by a pair $(i,j)$ where $1\leq i\leq n$
and $i\leq j\leq 2n-i$.
Each cell in a standard Young tableau carries
a distinct label from $\{1,2,\ldots,n^2\}$.
Consequently, the maximal label necessarily lies
in a cell of the form $(i,2n-i)$.
We denote by ${\mathbb P}={\mathbb P}_n$
the uniform probability measure on the set of
standard Young tableaux of the above shifted staircase shape.
That is, under ${\mathbb P}$, all tableaux are equally likely.

Our main result is the following.

\begin{thm}\label{thm:main}
Let $T$ be a random standard Young tableau
of the shifted shape $(2n-1, 2n-3, \ldots, 3,1)$
chosen uniformly at random, and let
$(n-S, n+S)$ be the cell of $T$
containing the maximal label $n^2$.
Then
\[
\lim_{n\to\infty}
\Pr\!\left(a \leq \frac{S}{n} \leq b\right)
=
\int_a^b
\frac{4}{\pi}\sqrt{1-x^2}\,dx,
\]
for $0 \leq a \leq b \leq 1$.
\end{thm}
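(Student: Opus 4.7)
The plan is to combine the standard identity $\Pr(T(c)=n^{2})=f^{\delta_{n}\setminus c}/f^{\delta_{n}}$ (valid for every corner $c$) with Thrall's shifted hook length formula, derive an explicit closed form for $\Pr(S=m)$, and then read off the quarter-circle density from its Stirling asymptotics. Since $|\delta_{n}|=n^{2}$ and the shifted hook at each corner equals $1$, the ratio reduces to
\[
\Pr(S=m)=\frac{1}{n^{2}}\prod_{c'}\frac{h^{*}(c')}{h^{*}(c')-1},
\]
where the product runs over those cells $c'\neq c=(n-m,\,n+m)$ whose shifted hook contains $c$.

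The first step is to enumerate the affected cells. They fall into three families: \textbf{(A)} the $2m$ remaining cells of row $n-m$; \textbf{(B)} the $n-m-1$ cells $(i',n+m)$ with $i'<n-m$ sitting directly above $c$ in column $n+m$; and \textbf{(C)} the $n-m-1$ cells $(i',n-m-1)$ with $i'<n-m$, which are affected because the shifted hook of $(i',n-m-1)$ reaches across the diagonal into row $n-m$, with $c$ as its rightmost cell. A short case analysis in the regimes $j<n$, $j=n$, $j>n$ yields the closed form
\[
h^{*}(i,j)=\left\{\begin{array}{ll}
2(2n-i-j) & (i\le j\le n-1),\\
2(n-i)+1 & (j=n),\\
2(2n-i-j)+1 & (n+1\le j\le 2n-i).
\end{array}\right.
\]
Plugging these hooks into the three products and telescoping produces
\[
\Pr(S=m)=\frac{(4m+1)(n-m)}{n^{2}}\cdot\frac{\binom{2m}{m}^{\!2}\binom{2(n-m)}{n-m}}{\binom{2(n+m)}{n+m}},
\]
which one can sanity check by hand: for $n=3$ this returns $\Pr(S=0,1,2)=(1/3,\,8/21,\,2/7)$, summing to $1$.

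With the closed formula in hand the rest is analytic. Setting $m=\lfloor xn\rfloor$ with $x\in(0,1)$ and applying Stirling to each central binomial, the exponents of $4$ exactly cancel ($2x+(1-x)-(1+x)=0$), and the polynomial factors combine with the prefactor $(4m+1)(n-m)/n^{2}\sim 4x(1-x)$ to give
\[
n\,\Pr(S=m)\ \longrightarrow\ 4x(1-x)\cdot\frac{1}{\pi x}\sqrt{\frac{1+x}{1-x}}=\frac{4}{\pi}\sqrt{1-x^{2}}.
\]
Convergence in distribution on $[0,1]$ then follows by a Riemann-sum argument: this local limit is uniform on each compact $[\delta,1-\delta]\subset(0,1)$, and the boundary contribution $\sum_{m>(1-\delta)n}\Pr(S=m)$ is $O(\delta^{3/2})$ since the quarter-circle density vanishes like $\sqrt{2(1-x)}$ at $x=1$.

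The main obstacle is family \textbf{(C)}. This reflected contribution arises from the third component of the shifted hook---cells in row $j+1$ past the diagonal---which has no ordinary-hook analogue and is easy to miss; omitting it produces a product that fails normalisation. A useful guard throughout the calculation is the sum-to-one identity $\sum_{c\text{ corner}}f^{\delta_{n}\setminus c}=f^{\delta_{n}}$ (simply the statement that the label $n^{2}$ lives in some corner). Once the closed formula is verified this way, the remaining Stirling and Riemann-sum steps are routine.
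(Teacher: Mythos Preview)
Your approach is correct and essentially identical to the paper's: both compute $\Pr(S=m)$ as the hook-product ratio $\frac{1}{n^{2}}\prod h^{*}/(h^{*}-1)$ over the cells whose shifted hook contains the removed corner, organise those cells into the same families (your (A), (B), (C) are precisely the paper's $\mathcal{D}_{2}\cup\mathcal{D}_{3}$, $\mathcal{D}_{4}\setminus\{c\}$, $\mathcal{D}_{1}$), and then pass to the quarter-circle density via Stirling/Wallis and a Riemann sum. Your closed form is packaged more compactly than the paper's (three central binomials versus five factors $b(\cdot)=4^{-k}\binom{2k}{k}$, though the two expressions are equivalent), and you are somewhat more careful about uniformity of the local limit near the endpoints, but the route is the same.
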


In addition to the analytical result above,
we conducted numerical experiments
on random sorting networks of type~$B$.
The remarkable properties of type~$A$ random sorting networks
have been thoroughly investigated
\cite{angel2007random, dauvergne2022archimedean}.
In contrast, we report numerical experiments
for the type~$B$ case, whose outcome
reveals a strikingly different yet equally fascinating pattern.

 \section{The hook length formula}

 We use the hook length formula counting
 the standard Young tableaux of the
 shifted shape.
 Let $\lambda = (\lambda_1, \lambda_2, \ldots, \lambda_n)$
 be a strict partition of an integer $m$, that is,
 $\lambda$ is a sequence of positive integer
 which satisfies
 $\lambda_1 > \lambda_2 > \ldots > \lambda_n$
 and
 $\lambda_1 + \cdots + \lambda_n = m$.
 The Young diagram of the shifted shape $\lambda$
 is the set of cells each of which is indexed by
 $(i,j)\in{\mathbb Z}^2$, satisfying $1\leq i\leq n,~ i \leq j < \lambda_i + i$.
 We identify $\lambda$ and the Young diagram of the shifted shape $\lambda$.
 The {\em shifted hook} of cell $(i,j)$ is, by definition,
 \[
  H_{i,j} =
  \left\{
  (i,j')\in\lambda\,|\, j' \geq j
  \right\}
  \cup
  \left\{
  (i',j)\in\lambda\,|\, i' \geq i
  \right\}
  \cup
  \left\{
  (j+1,j')\in\lambda\,|\, j' \geq j + 1
  \right\}.
 \]
 Then the {\em shifted hooklength} $h_{i,j}$ of cell $(i,j)$
 is $|H_{i,j}|$.
 Sagan \cite{sagan1980selecting} showed the hooklength formula
 for the shifted Tableaux:
 The number of shifted standard Young tableaux of shape
 $\lambda$ is equal to
 \[
  \frac{|\lambda|!}{\displaystyle \prod_{(i,j)\in\lambda}h_{i,j}}.
 \]
 For example, Figure $\ref{fig:hooklength}$ shows
 the case where shape is $\lambda = (4,2,1)$.
 We have $7!/(6\cdot 5\cdot 4\cdot 3\cdot 2\cdot 1\cdot 1)=7$
 standard tableaux of shape $\lambda$.
 \begin{center}
  \begin{figure}[H]
    \begin{center}
      \begin{tikzpicture}
       \draw (0,0) node {
       \begin{ytableau}
	~ &  &  & \\
	\none & ~ &   \\
	\none & \none &  \\
       \end{ytableau}};
       \draw[fill = black, line width = 1.5] (-0.3,0.6) circle (0.05) -- ++(1.3,0);
       \draw[line width = 1.5] (-0.3,0.6) -- ++(0,-1.2) -- ++(0.6,0);
       \draw[fill = black, line width = 1.5] (-0.3,0.6) circle (0.05);
       \draw[fill = black, line width = 1.5] (0.3,0.6) circle (0.05);
       \draw[fill = black, line width = 1.5] (1,0.6) circle (0.05);
       \draw[fill = black, line width = 1.5] (-0.3,0.) circle (0.05);
       \draw[fill = black, line width = 1.5] (0.3,-0.6) circle (0.05);
       
       \begin{scope}[xshift = 4cm]
	\draw (0,0) node{
	\begin{ytableau}
	 6 & 5 & 4 & 1\\
	 \none & 3 & 2\\
	 \none & \none  & 1 \\
	\end{ytableau}
	};
       \end{scope}
      \end{tikzpicture}

     \tiny
    \begin{ytableau}
     1 & 2 & 3 & 4\\
     \none & 5 & 6\\
     \none & \none  & 7 \\
    \end{ytableau}
    \begin{ytableau}
     1 & 2 & 3 & 5\\
     \none & 4 & 6\\
     \none & \none  & 7 \\
    \end{ytableau}
    \begin{ytableau}
     1 & 2 & 3 & 6\\
     \none & 4 & 5\\
     \none & \none  & 7 \\
    \end{ytableau}
    \begin{ytableau}
     1 & 2 & 3 & 7\\
     \none & 4 & 5\\
     \none & \none  & 6 \\
    \end{ytableau}
    \begin{ytableau}
     1 & 2 & 4 & 5\\
     \none & 3 & 6\\
     \none & \none  & 7 \\
    \end{ytableau}
    \begin{ytableau}
     1 & 2 & 4 & 6\\
     \none & 3 & 5\\
     \none & \none  & 7 \\
    \end{ytableau}
    \begin{ytableau}
     1 & 2 & 4 & 7\\
     \none & 3 & 5\\
     \none & \none  & 6 \\
    \end{ytableau}

     \caption{The shifted shape $(4,2,1)$ and the hook $H_{1,2}$
     whose length is $5$
     (top left),
     hooklengths of cells in the Young diagram (top right),
     and all of the Young tableaux of shape $\lambda$}
     \label{fig:hooklength}
    \end{center}
  \end{figure}
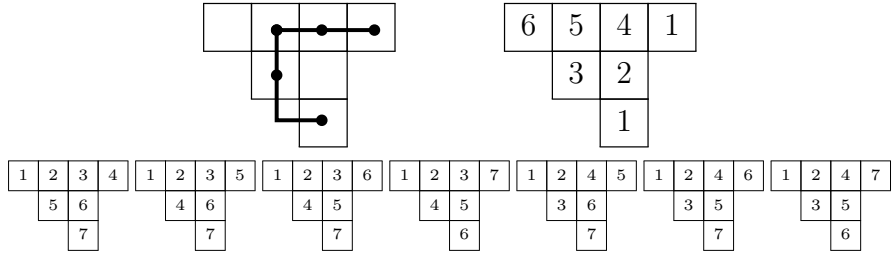
 \end{center}

 \begin{center}
  \begin{figure}[H]
   \begin{center}
   \end{center}
  \end{figure}
 \end{center}

   \begin{thm}\label{thm:exact}
    Let $S$ be as described in Theorem $\ref{thm:main}$. Then
  \[
  \pr\left(S = r\right)
  =
   \frac{(2r+1)(2n-2r-1)}{n^2}\frac{b(r)^2b(n-r-1)b(2r+1)}{b(2r)b(r+n)},
  \]
  where $b(r) = \frac{1}{2^{2r}}{2r\choose r}$.
   \end{thm}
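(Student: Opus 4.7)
The plan is to reduce Theorem~\ref{thm:exact} to Sagan's shifted hook-length formula and then simplify the resulting product of ratios. Since the maximal label $n^2$ of a random tableau $T$ of shape $\lambda=(2n-1,2n-3,\ldots,1)$ must occupy a removable corner, and the corners are precisely the cells $(i,2n-i)$, the event $\{S=r\}$ is the event that $T(n-r,n+r)=n^2$. Deleting this entry gives a bijection between such tableaux and shifted standard Young tableaux of the strict shape $\mu^{(r)}=\lambda\setminus\{(n-r,n+r)\}$, so that
\[
\pr(S=r)\;=\;\frac{f^{\mu^{(r)}}}{f^{\lambda}}\;=\;\frac{1}{n^2}\cdot\frac{\prod_{(i,j)\in\lambda}h_{i,j}}{\prod_{(i,j)\in\mu^{(r)}}h'_{i,j}},
\]
where $h$, $h'$ denote the shifted hook lengths in $\lambda$ and $\mu^{(r)}$ respectively.

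The next step is to determine, directly from the three-part definition of $H_{i,j}$, exactly which cells $(i,j)\neq(n-r,n+r)$ contain $(n-r,n+r)$ in their hook; for each such cell the hook length drops by exactly one. A case analysis gives three pairwise disjoint families: (A) the arm cells $(n-r,j)$ with $n-r\le j\le n+r-1$; (B) the leg cells $(i,n+r)$ with $1\le i\le n-r-1$; and (C) the ``wrap-around'' cells $(i,n-r-1)$ with $1\le i\le n-r-1$, which are affected because the third component of their shifted hook is the entire row $n-r$. Together with the trivial factor $h_{n-r,n+r}=1$, these account for all differences between the numerator and denominator.

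I would then plug in the explicit hook lengths $h_{i,j}=4n-2i-2j$ for $j\le n-1$ and $h_{i,j}=4n-2i-2j+1$ for $j\ge n$, obtained by a direct count in each region. Splitting family (A) at $j=n$, each of (A), (B), (C) becomes a product of ratios of the form $\tfrac{2k}{2k-1}$ or $\tfrac{2k+1}{2k}$. The identities
\[
\prod_{k=1}^{r}\frac{2k-1}{2k}=b(r),\qquad \prod_{k=1}^{r}\frac{2k+1}{2k}=(2r+1)\,b(r),\qquad \prod_{k=a}^{b}\frac{2k}{2k-1}=\frac{b(a-1)}{b(b)}
\]
then collapse the total product to $(2r+1)(2n-2r-1)\cdot\tfrac{b(r)^2b(n-r-1)b(2r+1)}{b(2r)b(n+r)}$, and dividing by $n^2$ yields the stated expression.

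The main obstacle is family~(C): unlike the ordinary (unshifted) case, the third component of the shifted hook makes cells in a column far from the removed corner sensitive to its deletion, so one must carefully verify both the pairwise disjointness of (A), (B), (C) and that no further hook lengths change. Once this bookkeeping is correct, the telescoping identities handle the edge cases $r=0$ and $r=n-1$ automatically as empty products, and the remaining algebra is routine.
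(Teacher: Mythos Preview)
Your proposal is correct and follows essentially the same route as the paper: both reduce $\pr(S=r)$ to a ratio of shifted hook-length products, identify the cells whose hook contains the removed corner $(n-r,n+r)$, and telescope the resulting products of consecutive even/odd ratios into the $b(\cdot)$ form. Your families (A), (B), (C) coincide with the paper's $\mathcal{D}_2\cup\mathcal{D}_3$, $\mathcal{D}_4\setminus\{(n-r,n+r)\}$, and $\mathcal{D}_1$ respectively, and your later split of (A) at $j=n$ is exactly the paper's $\mathcal{D}_2$/$\mathcal{D}_3$ distinction, so the two arguments are the same up to bookkeeping.
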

   \begin{proof}
    Let $d(\lambda)$ be the number of the standard
    Young tableaux of shifted shape $\lambda$.
    Let $\lambda$ be the Young diagram of the
    shifted shape $(2n-1, 2n-3, \ldots, 3,1)$
    and let $\lambda'$ be $\lambda\backslash\{(n-r,n+r)\}$.
    The hook length of the cell $(i,j)$ in $\lambda'$
    is denoted by $h'_{i,j}$
    Then the ratio 
    $
    \frac{d(\lambda')}{d(\lambda)}
    $
    is equal to $\pr(S = r)$.
    This ratio is determined by the
    hook lengths of the cells
    whose hook include the cell $(n-r, n+r)$.
    Each such cell is in exactly one of the following sets
    (See Figure $\ref{fig:hooklegnthdiff}$),
   \begin{eqnarray*}
    {\mathcal D}_1 & =  & \left\{(i,n-r-1)\,|\,1 \leq i \leq n-r-1\right\},\\
    {\mathcal D}_2 & =  & \left\{(n-r,j)\,|\, n - r \leq j \leq n-1\right\},    \\
    {\mathcal D}_3 & =  & \left\{(n-r,j)\,|\, n \leq j < n+r \right\},\\
    {\mathcal D}_4 & =  & \left\{(i,n+r)\,|\, 1 \leq i \leq n-r \right\}.
   \end{eqnarray*}
    \begin{center}
     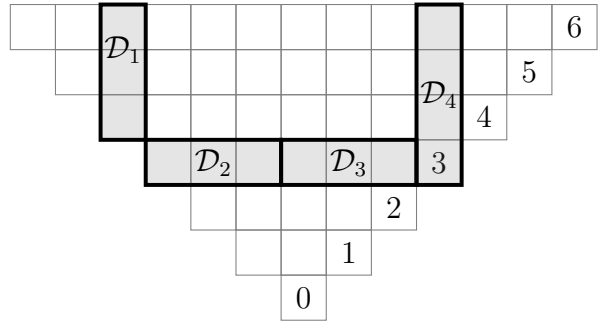
\begin{figure}[H]
      \begin{center}
	\begin{tikzpicture}[scale = 0.6]
	 \foreach \i in {1,...,7}{
	 \pgfmathsetmacro{\n}{15-2*\i}
	 \foreach \j in {1,...,\n}{
	 \draw[color=gray] (\j+\i,-\i) ++(-0.5,-0.5) -- ++(1,0) -- ++(0,1) -- ++(-1,0) -- cycle;
	 }
	 }
	 \foreach \r in {0,...,6}{
	 \draw (8+\r,\r-7) node {\r};
	 }
	 \fill[fill=gray, opacity=0.2] (4,-3) ++ (-0.5,-0.5) -- ++(1,0) -- ++(0,3) -- ++(-1,0) -- cycle;
	 \draw[line width = 1.5] (4,-3) ++ (-0.5,-0.5) -- ++(1,0) -- ++(0,3) -- ++(-1,0) -- cycle;
	 \draw (4,-1.5) node {${\mathcal D}_1$};
	 \fill[fill=gray, opacity=0.2] (8,-4) ++ (-0.5,-0.5) -- ++(0,1) -- ++(-3,0) -- ++(0,-1) -- cycle;
	 \draw[line width = 1.5] (8,-4) ++ (-0.5,-0.5) -- ++(0,1) -- ++(-3,0) -- ++(0,-1) -- cycle;
	 \draw (6,-4) node {${\mathcal D}_2$};
	 \fill[fill=gray, opacity=0.2] (11,-4) ++ (-0.5,-0.5) -- ++(0,1) -- ++(-3,0) -- ++(0,-1) -- cycle;
	 \draw[line width = 1.5] (11,-4) ++ (-0.5,-0.5) -- ++(0,1) -- ++(-3,0) -- ++(0,-1) -- cycle;
	 \draw (9,-4) node {${\mathcal D}_3$};
	 \fill[fill=gray, opacity=0.2] (11,-4) ++ (-0.5,-0.5) -- ++(1,0) -- ++(0,4) -- ++(-1,0) -- cycle;
	 \draw[line width = 1.5] (11,-4) ++ (-0.5,-0.5) -- ++(1,0) -- ++(0,4) -- ++(-1,0) -- cycle;
	 \draw (11,-2.5) node {${\mathcal D}_4$};
	\end{tikzpicture}
       \caption{Cells whose hooklengths in $\lambda$ differ from those in $\lambda'=\lambda - \{(n-r,n+r)\}$,
       where $r=3$}
       \label{fig:hooklegnthdiff}
      \end{center}
     \end{figure}
    \end{center}
    Let ${\mathcal D} = {\mathcal D}_1 \cup {\mathcal D}_2 \cup {\mathcal D}_3 \cup {\mathcal D}_4$ and ${\mathcal D}' = {\mathcal D}\backslash\left\{(n-r,n+r)\right\}$.
    Then, by using the hook length formula,  we have
    \begin{eqnarray*}
    \frac{d(\lambda')}{d(\lambda)}
     & = &
    \frac
    {\displaystyle \prod_{(i,j)\in{\mathcal D}}h_{i,j}}
    {n^2\displaystyle \prod_{(i,j)\in{\mathcal D}'}h'_{i,j}}
    \\
     & = &
      \frac{1}{n^2}
      \frac{\prod_{(i,j)\in{\mathcal D}_1}h_{i,j}}
      {\prod_{(i,j)\in{\mathcal D}_1}h'_{i,j}}{}
      \frac{\prod_{(i,j)\in{\mathcal D}_2}h_{i,j}}
      {\prod_{(i,j)\in{\mathcal D}_1}h'_{i,j}}{}
      \frac{\prod_{(i,j)\in{\mathcal D}_3}h_{i,j}}
      {\prod_{(i,j)\in{\mathcal D}_1}h'_{i,j}}{}
      \frac{\prod_{(i,j)\in{\mathcal D}_4}h_{i,j}}
      {\prod_{(i,j)\in{\mathcal D}_4\backslash\left\{(n-r,n+r)\right\}}h'_{i,j}}{}
    \end{eqnarray*}

    \begin{eqnarray*}
      \frac{\prod_{(i,j)\in{\mathcal D}_1}h_{i,j}}
       {\prod_{(i,j)\in{\mathcal D}_1}h'_{i,j}}{}
       & = &
       \frac{\prod_{k=0}^{n-r-2}\left(4(r+1)+2k\right)}
       {\prod_{k=0}^{n-r-2}\left(4(r+1)+2k-1\right)}\\
     & = & 
      \frac{\prod_{k=r+2}^{n}2(r+k)}
      {\prod_{k=r+2}^{n}\left(2(r+k)-1\right)}\\
     & = & 
      \frac{2^{2(n-r-1)}(4r+2)!\left(\prod_{k=r+2}^{n}(r+k)\right)^2}
      {(2n+2r)!}\\
     & = &
      \frac{1}{2^{4r+2}}{4r+2\choose 2r+1}
      \left(\frac{1}{2^{2n+2r}}{2n+2r\choose n+r}\right)^{-1}\\
     & = &
      \frac{b(2r+1)}{b(n+r)}
    \end{eqnarray*}
    \begin{eqnarray*}
      \frac{\prod_{(i,j)\in{\mathcal D}_2}h_{i,j}}
       {\prod_{(i,j)\in{\mathcal D}_2}h'_{i,j}}{}
       & = &
      \frac{\prod_{k=1}^r\left(2(r+k)-1\right)}{\prod_{k=1}^r2(r+k)}\\
       & = &
	\frac{\prod_{k=1}^r\left(2(r+k)-1\right)\prod_{k=1}^r2(r+k)}
	{\left(\prod_{k=1}^r2(r+k)\right)^2}\\
       & = &
	\frac{(4r)!(r!)^2}
	{2^{2r}(2r)!\left((2r)!\right)^2}
      = 
      \frac{b(2r)}{b(r)}
    \end{eqnarray*}
    \begin{eqnarray*}
      \frac{\prod_{(i,j)\in{\mathcal D}_3}h_{i,j}}
       {\prod_{(i,j)\in{\mathcal D}_3}h'_{i,j}}
       & = & 
      \frac{\prod_{k=1}^r\left(2k+1\right)}{\prod_{k=1}^r2k}
       = 
      \frac{(2r+1)!}{2^{2r}r!r!} = (2r+1)b(r)
    \end{eqnarray*}
    \begin{eqnarray*}
     \frac{\prod_{(i,j)\in{\mathcal D}_4}h_{i,j}}
      {\prod_{(i,j)\in{\mathcal D}_4\backslash\left\{(n-r,n+r)\right\}}h'_{i,j}}{}
      & = &
      \frac{\prod_{k=1}^{n-r-1}\left(2k+1\right)}{\prod_{k=1}^{n-r-1}2k}
      =
      (2(n-r)-1)b(n-r-1)
    \end{eqnarray*}
   \end{proof}

    \begin{proof}[Proof of Theorem\ref{thm:main}]
     We use the Wallis's formula
     \[
      b(n) \sim \frac{1}{\sqrt{\pi{n}}}.
     \]
     Let $r$ be an integer satisfying $a\leq \frac{r}{n} \leq b$.
     Then, by Theorem $\ref{thm:exact}$,
     \begin{eqnarray*}
      \pr(S = r)
       & \sim &
       \frac{(2r+1)(2n-2r-1)}{n^2}
       \frac{\displaystyle \left(\frac{1}{\sqrt{\pi{r}}}\right)^2\frac{1}{\sqrt{\pi(n-r-1)}}\frac{1}{\sqrt{\pi(2r+1)}}}
       {\displaystyle\frac{1}{\sqrt{\pi(2r)}}\frac{1}{\sqrt{\pi(n+r)}}}\\
      & \sim &
       \frac{4}{n\pi}
       \sqrt{\left[1-\left(\frac{r}{n}\right)^2\right]
       \frac{2r(1-r/n)}{(2r+1)(1-(r+1)/n)}
       }\\
      & \sim & 
       \frac{4}{n\pi}
       \sqrt{1-\left(\frac{r}{n}\right)^2}
     \end{eqnarray*}
     Therefore,we have
     \[
     \pr\left(a\leq \frac{S}{n}\leq b\right)
     \sim
     \sum_{a\leq \frac{r}{n}\leq b}
     \frac{4}{n\pi}
     \sqrt{1-\left(\frac{r}{n}\right)^2}
     \rightarrow
     \int_a^b\frac{4}{\pi}\sqrt{1-x^2}\,dx
     ~~~
     (n\rightarrow\infty).
     \]

   \end{proof}

 \section{Numerical experiments on random sorting network of type $B$}

\subsection{Random sorting network and their sampling method}

A {\em signed permutation} of $n$ letters is a sequence
\[
(c_1,c_2,\ldots,c_n),\quad 
c_i\in\{\pm1,\pm2,\ldots,\pm n\},\ |c_i|\neq|c_j|\mbox{ for }i\neq j.
\]
We may think of it as a deck of $n$ labeled cards,
each carrying a sign $+1$ or $-1$ (indicating the direction up or down).
Let $\tau_i$ ($1\le i<n$) denote the adjacent transposition
interchanging $c_i$ and $c_{i+1}$,
and let $\sigma_0$ denote the sign-change operation on the first card.
The group generated by $\{\sigma_0,\tau_1,\ldots,\tau_{n-1}\}$ 
is the hyperoctahedral group $B_n$ of signed permutations.

The {\em longest element} $w_0$ of $B_n$ flips all signs:
\[
w_0(i)=-i \quad (1\le i\le n).
\]
A {\em reduced word} of $w_0$ is a minimal sequence 
$(s_1,s_2,\ldots,s_{N})$ with $s_i\in\{0,1,\ldots,n-1\}$
such that $w_0=\sigma_{s_1}\sigma_{s_2}\cdots\sigma_{s_N}$,
where $\sigma_0=\sigma_0$ and $\sigma_i=\tau_i$ for $i\ge1$.
The number of factors $N$ equals the length $\ell(w_0)=n^2$.

Each reduced word may be viewed as a {\em sorting network}:
starting from the identity permutation, we successively apply 
the adjacent operations $\sigma_{s_i}$, 
each of which swaps or flips neighboring entries.
Plotting the positions of the labels $1,\ldots,n$ against the step index 
gives a collection of $n$ continuous curves,
called the {\em trajectories} of the network.
In type~$B$, these trajectories are constrained by reflection at height~$0$,
corresponding to the action of the generator $\sigma_0$.



Haiman~\cite{haiman1992dual} constructed a bijection between 
standard Young tableaux of the shifted staircase shape 
$(2n-1,2n-3,\ldots,3,1)$
and reduced words of the longest element $w_0$ of the hyperoctahedral group $B_n$.
This bijection enables us to translate probabilistic questions 
on random reduced words into equivalent questions 
on random shifted tableaux.
In particular, 
to generate uniform random reduced words for the type~B longest element, 
we first sample a uniform standard Young tableau (SYT) of 
shifted staircase shape $(2n-1,2n-3,\dots,3,1)$ using 
\emph{Sagan's hook walk algorithm}~\cite{sagan1980selecting}.

\subsection{Frequency of each letter in a random reduced word}
   \begin{figure}[H]
    \begin{center}
     \includegraphics[width=6cm]{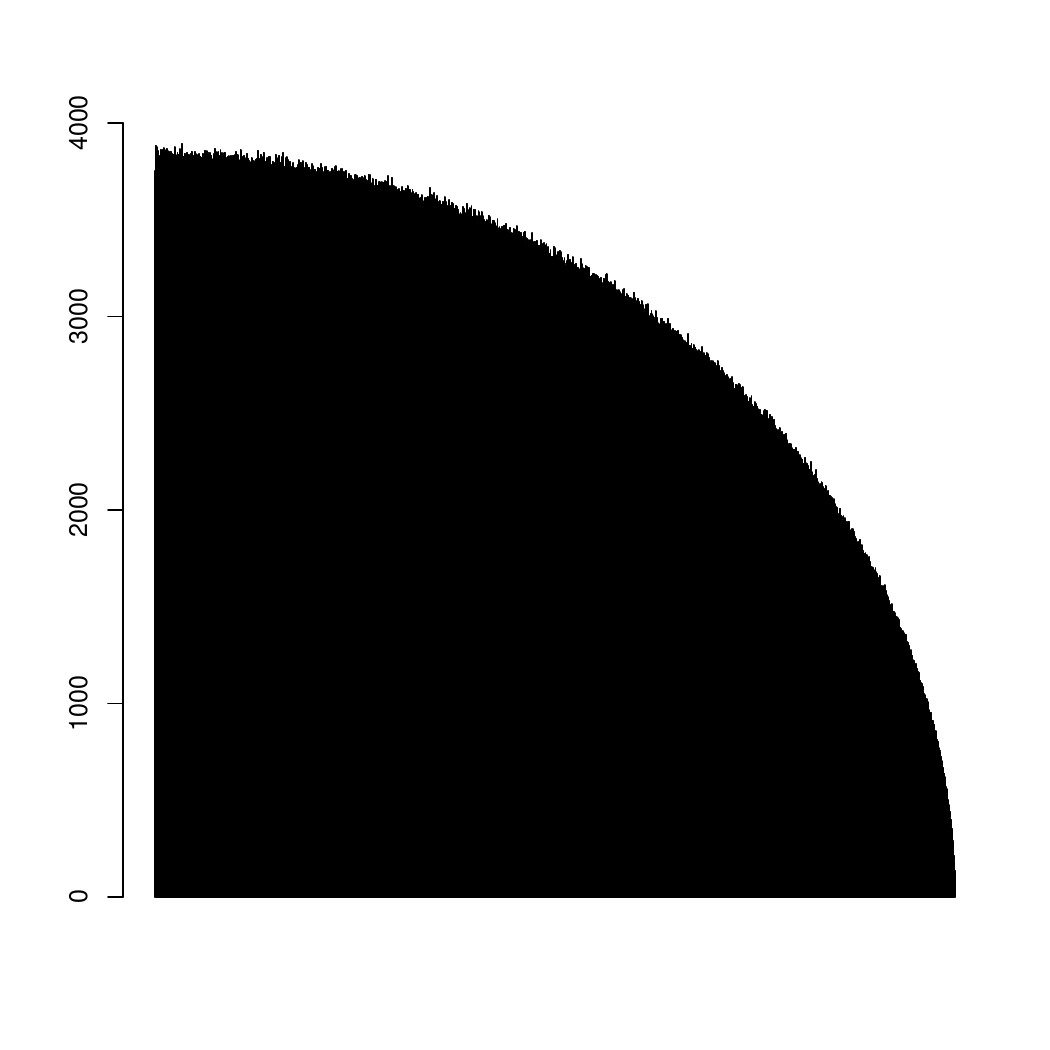}
    \end{center}
    \caption{The frequency of the generating elements of
    a reduced expression of the longest element of $B_{3000}$
    chosen uniformly at random.}
    \label{fig:freqletter}
   \end{figure}

In the previous section, we have shown that the probability distribution of the first
letter in a uniformly random reduced word of the longest element in $B_n$
converges to the quarter-circle law.
Figure~\ref{fig:freqletter} shows the empirical frequencies of all
letters in such a random reduced word of $B_{3000}$, which also exhibit
a distribution close to the quarter-circle law.


\subsection{Trajectories}
Figure~\ref{fig:trajectory} illustrates a typical realization 
of a random type~$B$ sorting network.
Each curve traces the position of a particle labeled $i$ as it evolves
through the sequence of adjacent operations $\sigma_{s_1},\ldots,\sigma_{s_N}$.
The trajectories resemble those of type~$A$ networks 
but are reflected at height~$0$,
corresponding to the action of the generator $\sigma_0$.
\begin{figure}[H]
  \centering
  \includegraphics[width=\textwidth]{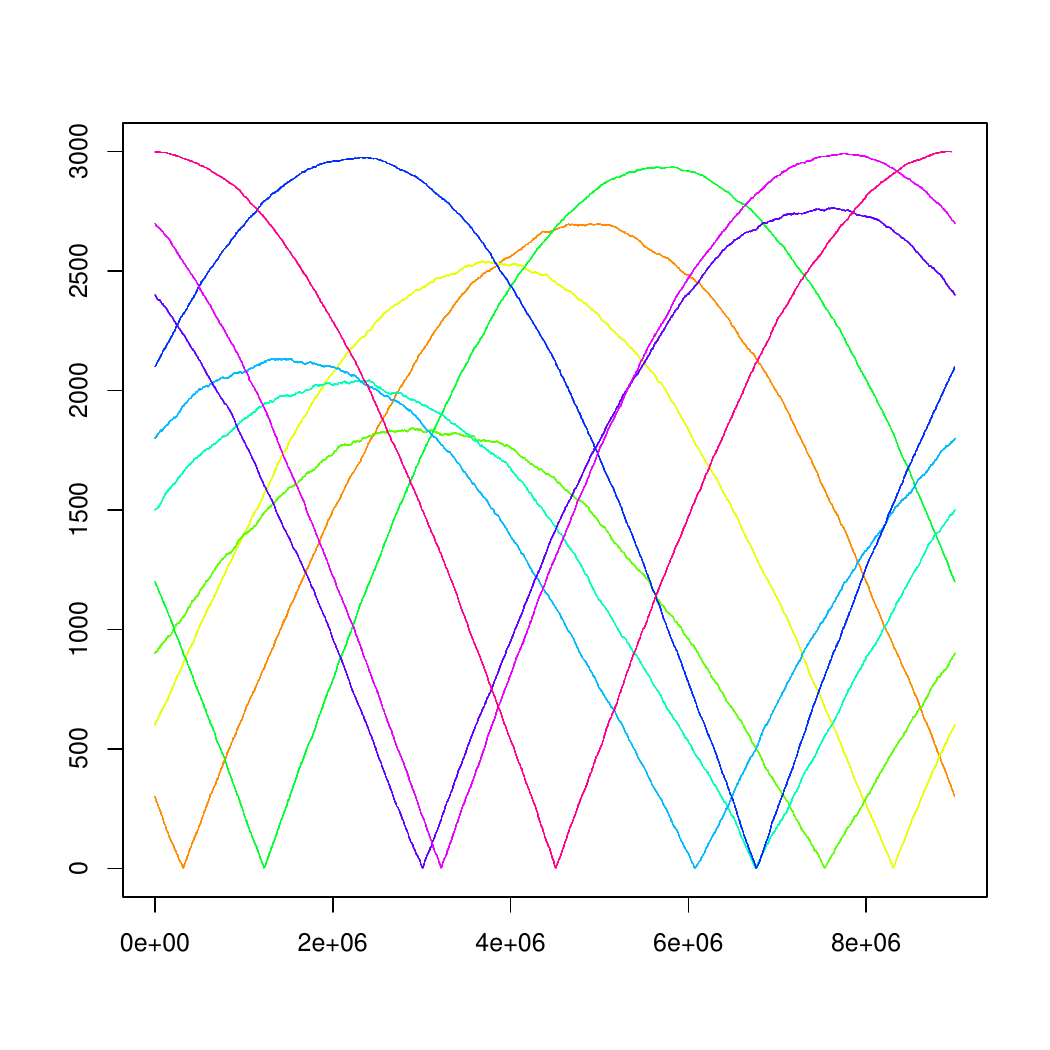}
  \caption{
  Trajectories of particles in a random type-$B$ sorting network ($n=3000$).
  Each curve represents the position of one element as it evolves
  under the sequence of signed adjacent transpositions.
  The trajectories form smooth sinusoidal arcs reflected at height~$0$,
  illustrating the characteristic symmetry of type~$B$.}
  \label{fig:trajectory}
\end{figure}

\subsection{Time evolution of the signed permutation matrix}

\begin{figure}[H]
  \centering
  \includegraphics[width=\textwidth]{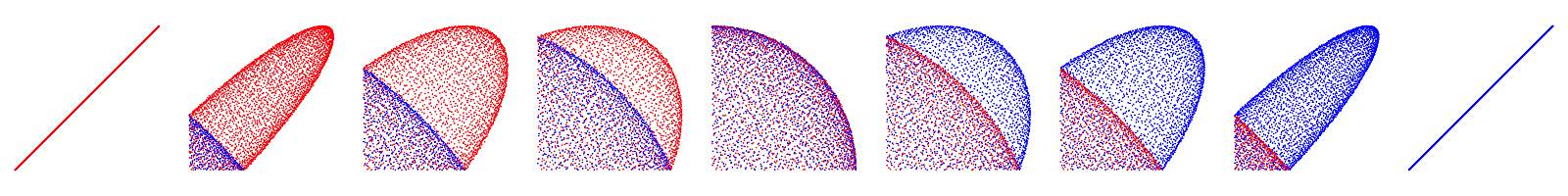}
  \caption{
 Time evolution of the signed permutation matrix
  in a random type-$B$ sorting network ($n=3000$).
  Red and blue dots represent $+1$ and $-1$ entries, respectively.
  The configuration evolves smoothly from the red-dominant (left)
  to the blue-dominant (right) state, exhibiting quarter-circle
  shaped regions that reflect the symmetry of type~$B$.}
  \label{fig:perm_mat_evolution}
\end{figure}

\section*{Conclusion}

We have investigated random sorting networks of type~$B$, 
through their correspondence with shifted standard Young tableaux 
via Haiman's bijection. 
Our analysis and simulations indicate that the normalized position 
of the maximal cell follows the quarter--circle law, 
in contrast to the semicircle law known for type~$A$. 
This provides the first quantitative evidence for a natural type~$B$ analogue 
of the random sorting network model. 
It would be interesting to extend these observations 
to other Coxeter types and to develop a theoretical proof 
of the quarter--circle limit law.


\end{document}